\numberwithin{equation}{section}
\newtheorem{Theorem}{Theorem}[section]
\newtheorem{Proposition}[Theorem]{Proposition}
\newtheorem{Lemma}[Theorem]{Lemma}
\newtheorem{Corollary}[Theorem]{Corollary}
\theoremstyle{definition}
\newtheorem{Definition}[Theorem]{Definition}
\newtheorem*{problem}{Problem} 
\newtheorem*{problem+}{Problem+} 
\newtheorem*{ackn}{Acknowledgements}
\newtheorem{Example}{Example}
\newcommand{\ddbar}{{\partial\overline\partial}}
\newcommand{\db}{\overline\partial}
\DeclareMathOperator{\Pic}{Pic}
\newcommand{\cali}[1]{\mathscr{#1}}
\newcommand{\cO}{\cali{O}}
\newcommand{\cC}{\cali{C}}
\newcommand{\field}[1]{\mathbb{#1}}
\newcommand{\Z}{\field{Z}}
\newcommand{\R}{\field{R}}
\newcommand{\C}{\field{C}}
\newcommand{\N}{\field{N}}
\newcommand{\Q}{\field{Q}}
\renewcommand{\P}{\field{P}}
\newcommand{\comment}[1]{}
\begin{document}

\title{On the extension of quasiplurisubharmonic functions} 
\author{Dan Coman}
\thanks{D.\ Coman is supported by the Simons Foundation grant No.\ 853088}
\address{Department of Mathematics, Syracuse University, 
Syracuse, NY 13244-1150, USA}\email{dcoman@syr.edu}
\author{Vincent Guedj}
\thanks{V.\ Guedj is partially supported by the CIMI project Hermetic, ANR-11- LABX-0040}
\address{Institut de Math\'ematiques de Toulouse, Universit\'e Paul Sabatier, 118 route de Narbonne, 31062 Toulouse Cedex 9, FRANCE}
\email{vincent.guedj@math.univ-toulouse.fr}
\author{Ahmed Zeriahi}
\address{Institut de Math\'ematiques de Toulouse, Universit\'e Paul Sabatier, 118 route de Narbonne, 31062 Toulouse Cedex 9, FRANCE}
\email{ahmed.zeriahi@math.univ-toulouse.fr}
\subjclass[2010]{Primary 32U05; Secondary 31C10, 32C25, 32Q15, 32Q28}
\keywords{Quasiplurisubharmonic function, K\"ahler manifold, analytic subset}

\date{March 11, 2022}

\pagestyle{myheadings} 

\begin{abstract}
Let $(V,\omega)$ be a compact K\"ahler manifold such that $V$ admits a cover by Zariski-open Stein sets with the property that 
$\omega$ has a strictly plurisubharmonic exhaustive potential on each element of the cover.
 If $X\subset V$ is an analytic subvariety, we prove that any $\omega|_X$-plurisubharmonic function on $X$ extends to a $\omega$-plurisubharmonic function on $V$. 
 
 This result generalizes a previous result of ours on the extension of singular metrics of ample line bundles.
 It allows one to show that any transcendental K\"ahler class in the real Neron-Severi space $NS_{\R}(V)$   has this extension property.
\end{abstract}


\maketitle

\begin{center}
{\em Dedicated to L\'aszl\'o Lempert in honor of his 70th birthday}
\end{center}

\section{Introduction}\label{S:intro}

Let $(V,\omega)$ be a compact K\"ahler manifold of dimension $n$. 
We recall that a function $\psi:V\to[-\infty,+\infty)$ is called quasiplurisubharmonic (qpsh) if it is locally the sum of a plurisubharmonic (psh) function and a smooth one.
 If $\psi$ is qpsh then 
  $\psi\in L^1(V,\omega^n)$
  and 
   $\psi$ is called $\omega$-plurisubharmonic ($\omega$-psh) if $\omega+dd^c\psi\geq0$ in the sense of currents on $V$. Here $d=\partial+\db$, $d^c=\frac{1}{2\pi i}\,(\partial-\db)$. We denote by $PSH(V,\omega)$ the class of $\omega$-psh functions on $V$ and refer the reader to \cite{GZ,GZ17} for their basic properties. 
   
   \smallskip

Let now $X\subset V$ be a (closed) analytic subvariety. An upper semicontinuous function $\varphi:X\to[-\infty,+\infty)$ is called $\omega|_X$-psh if $\varphi\not\equiv-\infty$ on $X$ and if 
there exist an open cover $\{U_\alpha\}_{\alpha\in A}$ of $X$ and psh functions $\varphi_\alpha,\rho_\alpha$ defined on $U_\alpha$, where $\rho_\alpha$ is smooth and $dd^c\rho_\alpha=\omega$, such that $\rho_\alpha+\varphi=\varphi_\alpha$ holds on $X\cap U_\alpha$, for every $\alpha\in A$. 
Note that we allow $\varphi$ to be identically $-\infty$ on some (but not all) irreducible components of $X$. 

The function $\varphi$ is called {\em strictly} $\omega|_X$-psh if $\varphi$ is not identically $-\infty$ on any irreducible component of $X$ and if it is $(1-\varepsilon)\omega|_X$-psh for some small $\varepsilon>0$. The current $\omega|_X+dd^c\varphi$ is then called a K\"ahler current on $X$. 
We denote by $PSH(X,\omega|_X)$, resp. $PSH^+(X,\omega|_X)$, the class of $\omega|_X$-psh, resp. strictly $\omega|_X$-psh functions on $X$. For a detailed discussion of psh functions on complex spaces, we refer the reader to \cite{FN} and \cite[section 1]{De85}. 

\smallskip

By restriction, $\omega$-psh functions on $V$ yield $\omega|_X$-psh functions on $X$. An interesting problem is whether every $\omega|_X$-psh function $\varphi$ on $X$ arises in this way:

\begin{problem}
Does one have
$
PSH(V,\omega)|_X=PSH \left(X,\omega|_X \right) \,  ?
$
\end{problem}

In \cite[Theorem B]{CGZ13}, this was shown to be the case when $\omega$ is a Hodge form, i.e. $\omega$ is a representative of the Chern class $c_1(L)$ of a positive holomorphic line bundle $L$ over $V$ (this requires the manifold $V$ to be projective). The proof relies on an extension result with growth control of psh functions on analytic subvarieties of Stein manifolds \cite[Theorem A]{CGZ13}. This in turn used methods of Coltoiu based on Runge domains \cite[Proposition 2]{Col91}, and of Sadullaev \cite{Sa82} (see also \cite[Theorem 3.2]{BlLe03}).

\smallskip

Besides being a natural one, this problem has found important applications in K\"ahler geometry,
allowing one to prove the continuity of K\"ahler-Einstein potentials 
(see e.g. \cite{EGZ,EGZ17,Dar17,GGZ20}).

\smallskip

A related and slightly simpler question is:

\begin{problem+}
Does one have
$
PSH^+(V,\omega)|_X=PSH^+ \left(X,\omega|_X \right) \,  ?
$
\end{problem+}

When $X$ is a complex submanifold of $V$, it is not difficult to show that any smooth strictly $\omega|_X$-psh function on $X$ extends to a smooth strictly $\omega$-psh function on $V$ (see e.g. \cite[Proposition 2.1]{CGZ13} and  \cite{Sch}). 

In \cite{CT14}
 it was shown that the answer to Problem+ is again positive for functions 
with analytic singularities. 
The proof 
uses resolution of singularities and a gluing argument similar to that of Richberg \cite{R68}. 

A further result in this direction was recently obtained in \cite{WZ20,NWZ21}. 
Assuming that there exists a holomorphic retraction $U\to X$ on a neighborhood $U\subset V$ of $X$,
 it is proved in \cite{NWZ21} that any strictly $\omega|_X$-psh function on $X$ extends to a strictly $\omega$-psh function on $V$. 
These results 
all assume that $X$ is smooth.

\smallskip

When $X$ is singular, the equality 
$PSH^+(V,\omega)|_X=PSH^+ \left(X,\omega|_X \right)$
holds if the 
class $\{\omega\}$ belongs to the real Neron-Severi space $NS_{\R}(V)$.
This follows from \cite[Theorem B]{CGZ13} through a simple density argument. Recall that
$NS_{\R}(V)=NS(V) \otimes \R$, where
$$
NS(V)=H^{1,1}(V,\R) \cap H^2(V,\Z)/ torsion.
$$
Thus Hodge classes are rational points of $NS_{\R}(V)$.

\medskip

The main goal of this note is to show the following stronger result:

\begin{Theorem}\label{T:NS}
Let $(V,\omega)$ be a compact K\"ahler manifold and let $X$ be an analytic subvariety of $V$.
If $\{\omega\} \in NS_{\R}(V)$ then
$
PSH(V,\omega)|_X=PSH \left(X,\omega|_X \right).
$
\end{Theorem}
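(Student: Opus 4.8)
Since the restriction of an $\omega$-psh function to $X$ is visibly $\omega|_X$-psh, the plan is to establish the non-trivial inclusion: every $\varphi\in PSH(X,\omega|_X)$ extends to some $u\in PSH(V,\omega)$ with $u|_X=\varphi$. The first observation is that the hypothesis forces $V$ to be projective. Indeed $\{\omega\}$ lies in the open K\"ahler cone and in the linear subspace $NS_{\R}(V)\subset H^{1,1}(V,\R)$, hence in the nonempty open cone $\mathrm{Amp}(V)$ where these two meet; since rational classes are dense, $\{\omega\}$ can be perturbed to a rational K\"ahler, hence ample, class, so $V$ is projective by Kodaira.

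Next I would reduce to the case where $\omega$ is a positive real combination of Hodge forms. Since $\{\omega\}$ lies in the interior of the ample cone, which is spanned by Chern classes of ample line bundles, one may write $\{\omega\}=\sum_{j=1}^m\mu_j\,c_1(A_j)$ with $A_j$ ample and $\mu_j>0$ real. Pick a Hodge form $\omega_j\in c_1(A_j)$, set $\theta:=\sum_j\mu_j\omega_j$, and use the $\ddbar$-lemma on the compact K\"ahler manifold $V$ to write $\omega=\theta+dd^c h$ with $h\in C^\infty(V)$. Then $\varphi\mapsto\varphi+h|_X$ sends $PSH(X,\omega|_X)$ onto $PSH(X,\theta|_X)$ (local potentials $\rho_\alpha$ of $\omega$ being replaced by the smooth potentials $\rho_\alpha-h$ of $\theta$), while $v\mapsto v-h$ sends $PSH(V,\theta)$ onto $PSH(V,\omega)$; so it is enough to extend an arbitrary $\psi\in PSH(X,\theta|_X)$ to $PSH(V,\theta)$.

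To remove the irrationality of the $\mu_j$ I would approximate. Choose rationals $\mu_j^{(k)}\searrow\mu_j$ with $\mu_j^{(k)}>\mu_j$ and set $\theta_k:=\sum_j\mu_j^{(k)}\omega_j$, so that $\theta_k\searrow\theta$ and $\theta_k\ge\theta$. Because $\theta_k\ge\theta$, any $\psi\in PSH(X,\theta|_X)$ automatically lies in $PSH(X,\theta_k|_X)$. The class $\{\theta_k\}$ is rational and ample, so after multiplying by a common denominator $N_k$ the form $N_k\theta_k$ is a Hodge form and $N_k\psi\in PSH(X,N_k\theta_k|_X)$; applying the Hodge case \cite[Theorem B]{CGZ13} and dividing back by $N_k$ yields $\psi_k\in PSH(V,\theta_k)$ with $\psi_k|_X=\psi$.

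The remaining and decisive step is the passage $k\to\infty$. All $\psi_k$ lie in the fixed class $PSH(V,\theta_1)$ and restrict to $\psi$ on $X$, and since $\theta_k\searrow\theta$, any reasonable limit of the $\psi_k$ is $\theta$-psh. Two points make this the main obstacle. First, one must normalize so that $\sup_V\psi_k$ does not escape to $+\infty$; the natural device is to replace $\psi_k$ by the upper-regularized extremal extension $\bigl(\sup\{v\in PSH(V,\theta_k):v\le\psi\text{ on }X\}\bigr)^{*}$, but neither its uniform boundedness nor the fact that it still restricts to exactly $\psi$ is automatic, precisely because $X$ is pluripolar in $V$ whenever $\dim X<n$. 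Second, even granting an $L^1$-convergent (or monotone) subsequence, one must verify that the limit restricts to $\psi$ and not to something strictly larger, which is delicate for the same reason. I expect this to be handled not by the crude global limit above but by the local extension method of the paper: cover $V$ by Zariski-open Stein sets on which $\omega$ (equivalently $\theta$) carries a strictly plurisubharmonic exhaustive potential---available here because $V$ is projective and $\{\omega\}\in NS_{\R}(V)$---extend $\varphi$ on each piece with uniform estimates, and glue.
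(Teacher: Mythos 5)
Your reduction to the projectivity of $V$ and to a form $\theta=\sum_j\mu_j\omega_j$ with $\omega_j$ Hodge and $\mu_j>0$ real matches Lemma \ref{L:NS} and is correct. The genuine gap is the step you yourself flag as ``decisive'': after producing $\psi_k\in PSH(V,\theta_k)$ with $\psi_k|_X=\psi$ for rational approximations $\theta_k\searrow\theta$, you have no mechanism forcing a limit to restrict to exactly $\psi$ on the pluripolar set $X$ --- an $L^1$-limit followed by upper semicontinuous regularization only gives $\geq\psi$ on $X$, and the extremal-envelope normalization does not repair this. Your closing sentence correctly guesses that the fix is the Zariski-open Stein cover, but that is precisely where all the work lies, and none of it is supplied.

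Concretely, the paper does not approximate by rational classes at all. It isolates condition (ZOS) --- a finite cover by Zariski-open Stein sets $V_j=V\setminus H_j$ carrying smooth psh exhaustions $\rho_j$ with $dd^c\rho_j=\omega$ --- and observes (Proposition \ref{P:cc}) that this condition depends only on $\{\omega\}$, holds for Hodge classes, and is stable under positive \emph{real} linear combinations: on $V_j\cap W_k$ one simply takes $a_1\rho_j+a_2\eta_k$, so irrational coefficients cost nothing and no limit over $k$ is ever needed. The extension (Theorem \ref{T:mt}) is then proved directly in this setting by two devices you would need to reproduce: (a) the subextension Lemma \ref{L:subext}, which applies \cite[Theorem A]{CGZ13} on each Stein piece and glues by a maximum to produce, for each $c>1$, a $c\omega$-psh function equal on $X$ to $\varphi+(c-1)\theta-(c-1)M$ (with $\theta=\max_j\theta_j$ a fixed continuous $\omega$-psh function) and dominated on $V$ by a prescribed multiple of the previous approximant; and (b) an induction producing smooth $(1+\varepsilon_j)\omega$-psh functions $\psi_j$ such that $(1+\varepsilon_j)^{-1}\psi_j$ \emph{decreases} and satisfies the integral controls $\int_{X'}(\psi_j-\varphi)<1/j$ and $\int_W\psi_j<-j$ on components $W$ where $\varphi\equiv-\infty$. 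Monotonicity removes the need for upper regularization, and the integral control is exactly what guarantees that the decreasing limit equals $\varphi$ on $X$ rather than something strictly larger. Without (a) and (b) --- or a substitute for them --- your argument does not close.
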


Note that a compact K\"ahler manifold $(V,\omega)$ such that $\{\omega\} \in NS_{\R}(V)$ must be projective (see Lemma \ref{L:NS}).

\smallskip

We actually show that the methods developed in \cite{CGZ13} can be adapted to prove that this extension property
holds for a more general class of compact K\"ahler manifolds $(V,\omega)$ than the one of polarized projective manifolds 
considered in \cite[Theorem B]{CGZ13}:

 \smallskip
 
 \begin{Definition}
 We say that $(V,\omega)$ admits an {\em adapted Zariski-open Stein cover} if 
 \begin{itemize}
 \item  $V$ can be covered by finitely many open Stein sets $V_j$, $1\leq j\leq N$, such that $H_j:=V\setminus V_j$ is an analytic subvariety of $V$, and
 \item  there exist smooth exhaustion functions $\rho_j\geq0$ on each $V_j$ such that $\omega=dd^c\rho_j$.
 \end{itemize}
 In this case we also say that $(V,\omega)$ verifies condition (ZOS).
 \end{Definition}
 
 Note that the sets $V_j$ are connected, as we assume $V$ is connected and since $H_j$ are analytic subvarieties. This notion has the following basic properties:

\begin{Proposition}\label{P:cc}
Let $(V,\omega)$ be a compact K\"ahler manifold. We have the following:

(i) Condition (ZOS) only depends on the cohomology class of $\omega$.

(ii) The set of K\"ahler classes $\{\omega\}$ such that $(V,\omega)$ satisfies condition (ZOS) is a convex cone.

(iii) If $\{\omega\}$ is a Hodge class then $(V,\omega)$ satisfies condition (ZOS).
\end{Proposition}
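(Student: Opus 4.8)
The plan is to handle the three parts in turn, with part (iii) serving as the concrete anchor and parts (i)--(ii) as soft formal consequences once the right characterization is in hand. For (iii), suppose $\{\omega\}=c_1(L)$ for an ample line bundle $L$. After replacing $L$ by a sufficiently high power $L^{\otimes m}$ (which, by part (i), is harmless since it only scales the class), $L$ is very ample and gives an embedding $\iota\colon V\hookrightarrow\P^M$ with $\iota^*\cO(1)=L$. Pulling back the standard affine cover of $\P^M$ by the complements of the $M+1$ coordinate hyperplanes, we obtain Stein sets $V_j=V\setminus H_j$ with $H_j=\iota^{-1}(\{z_j=0\})$ an ample divisor, hence an analytic subvariety; $V_j$ is Stein because it is a closed submanifold of the affine space $\P^M\setminus\{z_j=0\}\cong\C^M$. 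On $V_j$ the Fubini--Study potential $\rho_j=\frac{1}{m}\log\bigl(\sum_k|z_k/z_j|^2\bigr)$ is smooth, nonnegative after adding a constant (and can be made a genuine exhaustion since $V_j$ is bounded in the affine chart, so $\rho_j\to+\infty$ along $H_j$), and satisfies $dd^c\rho_j=\omega$ up to adjusting $\omega$ inside its class by $dd^c$ of a global smooth function; here I use that two smooth forms in $c_1(L)$ differ by $dd^c$ of a global smooth function together with part (i) again. This establishes (ZOS) for Hodge classes.

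For part (i), I would argue that if $(V,\omega)$ satisfies (ZOS) with data $(V_j,\rho_j)$ and $\omega'=\omega+dd^c u$ for a smooth $u\colon V\to\R$, then the same open cover works with potentials $\rho_j'=\rho_j+u$: these are smooth, satisfy $dd^c\rho_j'=\omega'$, and remain exhaustions of $V_j$ since $u$ is bounded on the compact $V$ (boundedness is enough to preserve properness of the sublevel sets up to a shift, and nonnegativity is restored by adding a constant). For part (ii), homogeneity is clear — if $(V,\omega)$ satisfies (ZOS) then so does $(V,\lambda\omega)$ for $\lambda>0$, using the cover unchanged and potentials $\lambda\rho_j$. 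For additivity, suppose $\{\omega_0\},\{\omega_1\}$ are Kähler classes satisfying (ZOS) with covers $\{V_j^{(0)}\}_{j}$ and $\{V_k^{(1)}\}_{k}$ and potentials $\rho_j^{(0)},\rho_k^{(1)}$; I would take the common refinement $\{V_j^{(0)}\cap V_k^{(1)}\}_{j,k}$, whose members are still Stein (intersection of Stein opens in a manifold, or: complement of the analytic set $H_j^{(0)}\cup H_k^{(1)}$) with analytic complement, and use $\rho_j^{(0)}+\rho_k^{(1)}$ as the potential for $\omega_0+\omega_1$ on each member — these are nonnegative smooth exhaustions with the right $dd^c$. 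That the set in question is a cone and not merely a convex set then follows by combining homogeneity with convexity; by part (i) it descends to a well-defined cone in the space of Kähler classes.

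The step I expect to require the most care is verifying in (iii) that the affine-chart potentials $\rho_j$ are honest exhaustion functions of $V_j$ (i.e.\ that $\{\rho_j\leq c\}\Subset V_j$ for every $c$), rather than merely smooth strictly psh potentials: one must see that $\rho_j$ blows up along all of $H_j$, which uses that $H_j$ is precisely the zero locus of the relevant section and that $V$ is compact so that $V_j$ sits as a bounded (in fact relatively compact, after the embedding trick) piece of the affine chart. A secondary subtlety is the interaction between "$\omega=dd^c\rho_j$ on the nose" and "$\{\omega\}$ is a Hodge class": one only gets $dd^c\rho_j$ equal to \emph{some} representative of $c_1(L)$ on $V_j$, so the clean statement of the definition is recovered only after invoking part (i) to replace $\omega$ by that representative. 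Handling the normalization to nonnegativity is then routine, since adding constants costs nothing and the $V_j$ are covered by finitely many, so a single constant works simultaneously.
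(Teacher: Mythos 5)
Your overall route is the same as the paper's: part (i) by adding the global potential $\eta$ from the $\ddbar$-Lemma to each $\rho_j$, part (ii) by taking the common refinement $\{V_j\cap W_k\}$ with potentials $a_1\rho_j+a_2\eta_k$, and part (iii) by embedding $V$ projectively via a high power of $L$ and restricting the Fubini--Study potentials of the affine charts. The paper packages (iii) slightly differently --- it first checks (ZOS) for $(\P^N,\omega_{FS})$ with $\rho(z)=\log\sqrt{1+\|z\|^2}$ and then invokes a separate restriction-to-submanifolds property --- but this is the same argument.

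There is, however, one genuinely wrong justification in your treatment of the step you yourself single out as delicate. You assert that $\rho_j$ is an exhaustion of $V_j$ ``since $V_j$ is bounded in the affine chart'' and later that ``$V_j$ sits as a bounded (in fact relatively compact \ldots) piece of the affine chart.'' This is false: $V_j=V\setminus H_j$ is a \emph{closed} subvariety of the affine chart $\C^M$, and a closed subvariety of $\C^M$ of positive dimension is necessarily unbounded (a bounded one would be compact, hence finite). The correct mechanism is the opposite of what you wrote: because $V_j$ is closed in $\C^M$ and $\log\sqrt{1+\|w\|^2}$ is a proper exhaustion of all of $\C^M$, each sublevel set $\{\rho_j\le c\}\cap V_j$ is a closed subset of a bounded ball, hence compact in $V_j$. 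Equivalently, a sequence in $V_j$ escaping every compact subset of $V_j$ must, by compactness of $V$, accumulate on $H_j$, which in the chart corresponds to $\|w\|\to\infty$, where $\rho_j\to+\infty$. This is exactly the content of the paper's restriction property (a closed submanifold of a Stein set inherits the exhaustion by restriction), so the fix is one line; but as written your justification would not survive scrutiny. A minor additional caveat: in (ii), the parenthetical ``complement of the analytic set $H_j^{(0)}\cup L_k^{(1)}$'' establishes that the complement of $V_j^{(0)}\cap V_k^{(1)}$ is analytic, but it is not by itself a proof of Steinness (complements of analytic sets in compact manifolds need not be Stein); your other two justifications --- intersection of Stein opens, or, as the paper does it, exhibiting $a_1\rho_j+a_2\eta_k$ as a strictly psh exhaustion --- are the ones that do the work.
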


It follows from the above observations that condition (ZOS) is satisfied for
any K\"ahler class in the real Neron-Severi space $NS_{\R}(V)$ (see Lemma \ref{L:NS}).
We show moreover that these conditions  are stable under restriction and products, as well as by pull-back by holomorphic coverings.

\smallskip

The main technical result of the note is the following extension theorem in this setting,
which is a generalization of  \cite[Theorem B and Theorem 2.2]{CGZ13}:
 
\begin{Theorem}\label{T:mt}
Let $(V,\omega)$ be a compact K\"ahler manifold of dimension $n$ which verifies condition (ZOS) and let $X$ be an analytic subvariety of $V$. If $\varphi \in PSH(X,\omega|_X)$ then, given any constant $a>0$, there exists $\psi\in PSH(V,\omega)$ such that $\psi|_X=\varphi$ and  $\max_V\psi<\max_X\varphi+a$.
\end{Theorem}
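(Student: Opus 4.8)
The plan is to reduce the global extension problem to the Stein extension theorem with growth control — the analogue of \cite[Theorem A]{CGZ13} — applied on each member of the adapted Zariski-open Stein cover, and then to patch the resulting local extensions into a global one. After replacing $\varphi$ by $\varphi-\max_X\varphi$ I may assume $\max_X\varphi=0$, so the goal is to produce $\psi\in PSH(V,\omega)$ with $\psi|_X=\varphi$ and $\sup_V\psi<a$. First I would reduce to the case where $\varphi$ is bounded: if for each $k$ one can extend $\varphi_k:=\max(\varphi,-k)$ to $\psi_k\in PSH(V,\omega)$ with $\psi_k|_X=\varphi_k$, $\sup_V\psi_k<a$, and with $(\psi_k)_k$ decreasing, then $\psi:=\lim_k\psi_k=\inf_k\psi_k\in PSH(V,\omega)$ is the desired extension (this also disposes of the components on which $\varphi\equiv-\infty$, since there $\varphi_k\equiv-k\to-\infty$). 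To keep $(\psi_k)$ decreasing I would build $\psi_{k+1}$ using the form of the local theorem that allows prescribing a given $\omega$-psh upper bound, here $\psi_k$. So from now on assume $-C_0\le\varphi\le0$ on $X$.

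On each Stein set $V_j$ of the cover we have $\omega=dd^c\rho_j$, so $u_j:=\varphi+\rho_j$ is psh on the analytic subset $X_j:=X\cap V_j$ of $V_j$ (the local potentials $\rho_\alpha$ in the definition of $PSH(X,\omega|_X)$ differ from $\rho_j|_{X_j}$ by pluriharmonic functions). I would then apply the Stein extension theorem with growth control, in the abstract Stein-manifold setting and with $\rho_j$ as comparison exhaustion, to get $w_j\in PSH(V_j)$ with $w_j|_{X_j}=u_j$ and an upper bound of the form $w_j(z)\le\rho_j(z)+\sup_{X_j}\varphi+\varepsilon_j\le\rho_j(z)+\varepsilon_j$ on $V_j$, where $\varepsilon_j>0$ is at our disposal; moreover the estimate should be \emph{local} near $X_j$, bounding $w_j(z)$ by a supremum of $u_j$ over nearby points of $X_j$ plus $\varepsilon_j$. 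Setting $\psi_j:=w_j-\rho_j$ gives $dd^c\psi_j=dd^c w_j-\omega\ge-\omega$, so $\psi_j\in PSH(V_j,\omega)$, with $\psi_j\le\varepsilon_j$ on $V_j$ and $\psi_j|_{X_j}=\varphi$. Since $\psi_j$ is bounded above and $H_j=V\setminus V_j$ is analytic, its upper semicontinuous regularization extends $\psi_j$ to $PSH(V,\omega)$ with $\sup_V\psi_j\le\varepsilon_j$. The locality of the growth control forces, for every irreducible component $Z$ of $X$ meeting $V_j$, that $\psi_j=\varphi$ on all of $Z$ (the extension is tight near $X$, so it cannot bulge above $\varphi$ where $Z$ reaches $H_j$); this is the key quantitative gain over a mere qualitative extension.

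To finish I would set $\psi:=\max_{1\le j\le N}\psi_j\in PSH(V,\omega)$; then $\sup_V\psi\le\max_j\varepsilon_j<a$ once the $\varepsilon_j$ are small, and $\psi|_X\ge\varphi$ because every point of $X$ lies in some $X_j$ where $\psi_j=\varphi$. For the reverse inequality, $\psi_j\le\varphi$ must hold on every component of $X$; by the previous paragraph this is automatic on components meeting $V_j$, and when $X$ is irreducible some $V_j$ meets it (otherwise $X\subset\bigcap_j H_j=\emptyset$) and a single piece already yields $\psi_j|_X=\varphi$, finishing the proof. When $X$ is reducible with components distributed among the $H_j$ I would argue by induction on the number of components, at each step combining the tight extension of $\varphi$ on a new component $Z$ — chosen, using a $V_j$ meeting $Z$ together with a downward shift by a constant $\le\varepsilon_j+C_0$ on the region away from $Z$, so as not to exceed $\varphi$ on the components already treated — with the function built so far. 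The boundedness $-C_0\le\varphi\le0$ obtained in the first paragraph is exactly what makes such a finite shift effective.

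The hard part is the first ingredient: proving, in the abstract Stein-manifold setting of condition (ZOS), a Stein extension theorem for psh functions on analytic subsets with growth control sharp and local enough to guarantee both that the local extensions descend across the $H_j$ and that they do not overshoot $\varphi$ near the parts of $X$ reaching $H_j$. This is where the Runge-domain techniques of Coltoiu \cite{Col91} and Sadullaev \cite{Sa82} behind \cite[Theorem A]{CGZ13} enter; checking that they survive the passage from domains in $\C^N$ to a general Stein manifold, and that all constants can be tracked as above, is the technical heart of the argument. The remaining steps — the reduction to bounded $\varphi$, the extension across analytic sets, and the combination over the finite cover — are then routine.
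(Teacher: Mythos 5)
Your overall architecture---local extension with growth control on each Stein piece $V_j$, subtraction of the potential $\rho_j$, extension across $H_j$, and a max over the cover---is the right starting point and is indeed how the paper begins. But there is a genuine gap at the step you yourself flag as delivering ``the key quantitative gain'': the claim that the local extension $\psi_j=w_j-\rho_j$, once extended across $H_j$, equals $\varphi$ on \emph{all} of any irreducible component $Z$ of $X$ meeting $V_j$, including on $Z\cap H_j$. The growth control in \cite[Theorem A]{CGZ13} is multiplicative, not additive: for a continuous psh exhaustion $u$ with $\varphi<u$ on the analytic set, it produces $\widetilde\psi=\varphi$ on the set together with $\widetilde\psi<c'u$ for a constant $c'>1$ that cannot simply be taken equal to $1$. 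There is no ``local additive $\varepsilon_j$'' version bounding $w_j(z)$ by a supremum of $u_j$ over nearby points of $X_j$ plus $\varepsilon_j$; and even if there were, it would be vacuous exactly where you need it, because near a point $p\in Z\cap H_j$ the nearby values $u_j=\varphi+\rho_j$ on $X_j$ blow up (since $\rho_j\to+\infty$ along $H_j$), so no cancellation against $\rho_j(z)$ is forced. Consequently the upper semicontinuous extension of $\psi_j$ across $H_j$ is not controlled on $Z\cap H_j$: in the paper's construction it is deliberately arranged to be $-\infty$ there (by choosing $c'<c$, which makes $\psi_j\le(c'-c)\rho_j+O(1)\to-\infty$ near $H_j$), and in general it need not equal $\varphi$. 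Your subsequent patching over components ``distributed among the $H_j$'' by shifting a function down by a constant only ``on the region away from $Z$'' is also not a legitimate psh operation.

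This is precisely the difficulty the paper's proof is built to circumvent. Its Lemma \ref{L:subext} accepts that each one-chart extension must be $-\infty$ on $H_j$, so the max over $j$, restricted to $X$, is not $\varphi$ but $\varphi+(c-1)\theta-(c-1)M$, where $\theta=\max_j\theta_j$ is a fixed bounded continuous $\omega$-psh function; the defect is of size $O(c-1)$. An iteration with $c=1+\varepsilon_j\downarrow1$, combined with regularization \cite{De92,BK07} and integral estimates on $X$ (namely $\int_{X'}(\psi_j-\varphi)<1/j$ and $\int_W\psi_j<-j$ on components where $\varphi\equiv-\infty$), produces a decreasing sequence of smooth quasi-psh functions whose limit is the exact extension. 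Your truncation-and-decreasing-limit idea is in the same spirit as this iteration, but without a correct substitute for the exact one-chart extension the argument does not close.
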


Note that in Theorem \ref{T:mt} we  do not assume that $V$ is projective. Since qpsh functions on a compact K\"ahler manifold can be regularized (see \cite{De92,BK07}), one has the following immediate corollary:

\begin{Corollary}\label{C:reg} 
Let $(V,\omega)$ be a compact K\"ahler manifold which verifies condition (ZOS) and let $X$ be an analytic subvariety of $V$.
 If $\varphi \in PSH(X,\omega|_X)$ then there exists a sequence of smooth functions 
 $\varphi_j\in PSH(X,\omega|_X) \cap {\mathcal C}^{\infty}(X)$ which decrease pointwise to $\varphi$.
\end{Corollary}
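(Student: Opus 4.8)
The plan is to combine the extension result Theorem~\ref{T:mt} with the known regularization theorems for $\omega$-psh functions on a compact K\"ahler manifold. First I would apply Theorem~\ref{T:mt}, say with the constant $a=1$, to the given $\varphi\in PSH(X,\omega|_X)$: this produces a function $\psi\in PSH(V,\omega)$ with $\psi|_X=\varphi$. The purpose of first lifting $\varphi$ to the ambient manifold is precisely that the regularization machinery is available on $V$, whereas on a (possibly singular) subvariety $X$ it is not directly applicable.

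Next I would invoke the regularization theorem of B{\l}ocki--Ko{\l}odziej \cite{BK07}, building on Demailly \cite{De92}: on a compact K\"ahler manifold $(V,\omega)$, every $\psi\in PSH(V,\omega)$ is the pointwise limit of a decreasing sequence $\psi_j\in PSH(V,\omega)\cap\mathcal{C}^{\infty}(V)$. Here it matters that $\{\omega\}$ is a genuine K\"ahler class, so that no loss of positivity is incurred and the approximants are honestly $\omega$-psh; alternatively one could use Demailly's regularization with a small loss $\varepsilon_j\omega\to 0$ and absorb it after a harmless rescaling, but \cite{BK07} is cleaner for the present statement.

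Finally I would restrict to $X$: set $\varphi_j:=\psi_j|_X$. By the local definition of $\omega|_X$-psh functions recalled in the introduction, restriction maps $PSH(V,\omega)$ into $PSH(X,\omega|_X)$, so each $\varphi_j\in PSH(X,\omega|_X)$; each $\varphi_j$ is smooth on $X$, being the restriction of a $\mathcal{C}^{\infty}$ function on $V$; and $\varphi_j\downarrow\psi|_X=\varphi$ pointwise on $X$ because $\psi_j\downarrow\psi$ pointwise on $V$.

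The only point requiring attention — and it is bookkeeping rather than a real obstacle — is the meaning of ``smooth on $X$'' when $X$ is singular: one takes this to mean locally the restriction of a $\mathcal{C}^{\infty}$ function from the ambient manifold $V$, which is exactly what $\psi_j|_X$ provides. With that convention fixed, the corollary is an immediate consequence of Theorem~\ref{T:mt} and the cited regularization results.
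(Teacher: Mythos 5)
Your proposal is correct and is exactly the argument the paper has in mind: the corollary is stated as an immediate consequence of Theorem \ref{T:mt} combined with the regularization results of \cite{De92,BK07}, namely extend $\varphi$ to $\psi\in PSH(V,\omega)$, take a decreasing sequence of smooth $\omega$-psh approximants on $V$, and restrict to $X$. No differences worth noting.
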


This smoothing property plays a key role in establishing global continuity of singular K\"ahler-Einstein potentials
(see \cite[Theorem 2.1]{EGZ}). 
Corollary \ref{C:reg} thus provides an alternative proof of \cite[Theorem 3.9]{GGZ20}.
Note that the regularizing techniques of Demailly \cite{De92} and B\l ocki-Kolodziej \cite{BK07} break
down when $X$ is singular.

\medskip

Theorem \ref{T:mt} is proved in Section \ref{S:mt}. Theorem \ref{T:NS} follows from Theorem \ref{T:mt} and Proposition \ref{P:cc} (see Section \ref{SS:BP} $(v)$ and Lemma \ref{L:NS}). In Section \ref{S:ex} we prove Proposition \ref{P:cc} and we collect some examples of compact K\"ahler manifolds which admit an adapted Zariski-open Stein cover. Following \cite{Mat13}, we show in Proposition \ref{pro:matsumura} that the reference cohomological class has to be K\"ahler for the extension property to hold. 

\begin{ackn} 
We thank S\'ebastien Boucksom for several useful discussions, and the referee for pointing out a gap in the original proof of Proposition \ref{pro:matsumura}.

It is a pleasure to dedicate this work to L\'aszl\'o Lempert.
His contributions to complex analysis and geometry have been both profound and very elegant.
\end{ackn}

\section{Proof of Theorem \ref{T:mt}}\label{S:mt}

Let $X$ be an analytic subvariety of a compact K\"ahler manifold $(V,\omega)$ of dimension $n$, which verifies condition (ZOS). Note that the function $-\rho_j\leq0$ is an $\omega$-psh function on $V_j$. Since $V\setminus V_j=H_j$ is an analytic set, $-\rho_j$ extends to an $\omega$-psh function $\theta_j$ on $V$, such that $\theta_j\leq0$ on $V$. Since $\rho_j$ is an exhaustion of $V_j$, we see that $\theta_j=-\infty$ on $H_j$. Let 
\begin{equation}\label{e:theta}
\theta:=\max\{\theta_j:\,1\leq j\leq N\}.
\end{equation}
It follows that $\theta\leq0$ is a continuous $\omega$-psh function on $V$, and we set 
\begin{equation}\label{e:m}
m:=-\min_{z\in V}\theta(z)>0.
\end{equation}

\par The following lemma gives special subextensions of $\omega|_X$-psh functions on $X$. 

\begin{Lemma}\label{L:subext}
Let $\varepsilon\geq0$ and $u$ be a continuous $(1+\varepsilon)\omega$-psh function on $V$ such that $u(z)\leq0$ for all $z\in V$. If $c>1$ and $\varphi$ is an $\omega|_X$-psh function on $X$ such that $\varphi<u$, then there exists a $c\omega$-psh function $\psi$ on $V$ such that 
\[\frac{1}{c}\,\psi(z)\leq\frac{1}{1+\varepsilon}\,u(z),\;\forall z\in V,\]
and 
\[\psi(z)=\varphi(z)+(c-1)\theta(z)+(c-1)\min_{\zeta\in V}u(\zeta),\;\forall z\in X.\]
\end{Lemma}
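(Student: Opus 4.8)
The plan is to extend a suitably shifted copy of $\varphi$ separately over each Stein piece $V_j$ by means of a growth-controlled extension theorem for plurisubharmonic functions on analytic subvarieties of Stein manifolds, and then to patch these local extensions together with a maximum, using the functions $\theta_j$ — which are $-\infty$ precisely on $H_j=V\setminus V_j$ — as cut-offs; this is modeled on the proof of \cite[Theorem 2.2]{CGZ13}. Write $M:=\min_{\zeta\in V}u(\zeta)\le0$. The elementary inequality used throughout is that on $X$ one has $\varphi+(c-1)M<u+(c-1)u=cu\le\tfrac{c}{1+\varepsilon}u$, where we used $M\le u$, $c>1$, $\varphi<u\le0$ and $1+\varepsilon\ge1$ (this holds even where $\varphi=-\infty$). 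Thus the value $\varphi+(c-1)\theta+(c-1)M$ prescribed in the conclusion is already strictly below $\tfrac{c}{1+\varepsilon}u$ on $X$, which is exactly the room needed. Recall also that on each $V_j$ we have $\theta_j=-\rho_j$ and $\omega=dd^c\rho_j$, so that $\varphi+\rho_j$ is psh on the analytic subset $X\cap V_j$ of the Stein manifold $V_j$.

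For each $j$, the plan is to apply a Stein extension theorem with growth control (in the spirit of \cite[Theorem A]{CGZ13}) to extend the psh function $\varphi+\rho_j+(c-1)M$ from $X\cap V_j$ to a psh function $g_j$ on $V_j$, dominated by a continuous psh function $\Phi_j$ on $V_j$ chosen with two properties: first, $\Phi_j\le\tfrac{c}{1+\varepsilon}u+c\rho_j$ on $V_j$ (the right-hand side is psh on $V_j$ because $u$ is $(1+\varepsilon)\omega$-psh), so that $\psi_j:=g_j-c\rho_j$, which is $c\omega$-psh on $V_j$, satisfies $\psi_j\le\tfrac{c}{1+\varepsilon}u$ there; and second, $\Phi_j$ grows strictly more slowly than $c\rho_j$ towards $H_j$ — here one uses that $\varphi$, hence the datum $\varphi+\rho_j+(c-1)M$, is bounded above on $X$, so near $X\cap H_j$ it may be dominated by a function comparable to $\rho_j$ — so that $\psi_j=g_j-c\rho_j\to-\infty$ along $H_j$. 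Being $c\omega$-psh on $V_j$, bounded above, and tending to $-\infty$ near the analytic set $H_j$, the function $\psi_j$ then extends (with value $-\infty$ on $H_j$) to a $c\omega$-psh function on all of $V$, still $\le\tfrac{c}{1+\varepsilon}u$, with $\psi_j=\varphi+\rho_j+(c-1)M-c\rho_j=\varphi+(c-1)\theta_j+(c-1)M$ on $X\cap V_j$ and $=-\infty$ on $X\cap H_j$, consistently.

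Finally, set $\psi:=\max_{1\le j\le N}\psi_j$. It is $c\omega$-psh on $V$ as a finite maximum of $c\omega$-psh functions, it satisfies $\psi\le\tfrac{c}{1+\varepsilon}u$, i.e. $\tfrac1c\psi\le\tfrac1{1+\varepsilon}u$ on $V$, and it is not identically $-\infty$ since $\varphi\not\equiv-\infty$ on $X$. On $X$ one computes $\psi=\max_j\bigl(\varphi+(c-1)\theta_j+(c-1)M\bigr)=\varphi+(c-1)\max_j\theta_j+(c-1)M=\varphi+(c-1)\theta+(c-1)M$ by the definition \eqref{e:theta} of $\theta$, as required. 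When $\varphi\equiv-\infty$ on some irreducible components of $X$ one uses the extension theorem in the form that permits $g_j$ to be $\equiv-\infty$ on the corresponding components of $X\cap V_j$, and the maximum over $j$ still reconstitutes the prescribed boundary value.

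The main obstacle is the choice of the dominating function $\Phi_j$ in the local step: the bound $\psi_j\le\tfrac{c}{1+\varepsilon}u$ pushes $\Phi_j$ to grow like $c\rho_j$ towards $H_j$, while the requirement $\psi_j\to-\infty$ along $H_j$ (needed so that $\psi_j$ extends as a $c\omega$-psh function across the analytic set $H_j$) forbids precisely that rate. Reconciling the two is where the hypotheses enter — condition (ZOS), to have the exhaustions $\rho_j$ with $dd^c\rho_j=\omega$ and the cut-offs $\theta_j$; the strict inequality $\varphi<u$ together with the boundedness of $\varphi$, to gain room near $H_j$; and the continuity and $(1+\varepsilon)\omega$-plurisubharmonicity of $u$ — and it is the reason the conclusion records the shift by $(c-1)\theta+(c-1)M$ rather than $\varphi$ itself. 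A second essential input is the precise growth-controlled version of the extension theorem for psh functions on analytic subvarieties of Stein manifolds; isolating and invoking the right statement (via the Runge-domain techniques of Coltoiu and the extremal envelopes of Sadullaev, as in \cite{CGZ13}) is what makes the local step work.
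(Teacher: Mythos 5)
Your overall architecture coincides with the paper's: extend a shifted copy of $\varphi$ on each Stein piece $V_j$, subtract $c\rho_j$ to turn the local extension into a $c\omega$-psh function, push it across $H_j$ using decay to $-\infty$, and take the maximum so that \eqref{e:theta} reconstitutes $(c-1)\theta$ on $X$. However, the argument is not complete: you yourself single out, as ``the main obstacle,'' the simultaneous requirements that the dominating function $\Phi_j$ lie below $\frac{c}{1+\varepsilon}u+c\rho_j$ and yet grow strictly slower than $c\rho_j$ towards $H_j$, and you leave this unresolved, deferring to ``the right statement'' of a growth-controlled extension theorem without exhibiting it. That reconciliation is precisely the content of the lemma, so this is a genuine gap rather than a routine omission.

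The paper's resolution is short but essential, and it hinges on an intermediate exponent. Set $M=-\min_V u\ge 0$ (note the sign: with your convention $M\le 0$, the datum $\varphi+\rho_j+(c-1)M$ need not lie below a \emph{nonnegative} exhaustion, which is what \cite[Theorem A]{CGZ13} requires). Then $h_j:=\frac{1}{1+\varepsilon}u+\rho_j+M$ is a continuous, nonnegative psh exhaustion of $V_j$, and on $X_j$ the datum satisfies $\varphi+\rho_j+M<u+\rho_j+M\le h_j$ since $u\le 0$. Fix $c'$ with $1<c'<c$ and apply \cite[Theorem A]{CGZ13} to obtain a psh extension $\widetilde\psi_j$ of $\varphi+\rho_j+M$ with $\widetilde\psi_j<c'h_j$ on $V_j$. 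Then $\psi_j:=\widetilde\psi_j-c\rho_j-cM$ is $c\omega$-psh on $V_j$ and satisfies $\psi_j<\frac{c}{1+\varepsilon}\,u+(c'-c)\,h_j$. Since $c'-c<0$ and $h_j\ge 0$, this gives $\psi_j\le\frac{c}{1+\varepsilon}\,u$; and since $h_j$ is an exhaustion, $(c'-c)\,h_j\to-\infty$ along $H_j$, so $\psi_j$ extends by $-\infty$ across $H_j$ to a $c\omega$-psh function on $V$. Thus a single dominating function, $c'h_j$ with $1<c'<c$, meets both of your requirements at once; this is the idea your write-up is missing. The remainder of your argument (the extension across $H_j$, the maximum over $j$, the components where $\varphi\equiv-\infty$, and the indices $j$ with $X\cap V_j=\emptyset$) matches the paper.
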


\begin{proof} Let $X_j=X\cap V_j$. We may assume that $X_j\neq\emptyset$ for $1\leq j\leq N'$, and $X_j=\emptyset$ for $N'<j\leq N$, where $N'$ is an integer with $1\leq N'\leq N$. Set 
\[M=-\min_{\zeta\in V}u(\zeta)\geq0.\]
Let $1\leq j\leq N'$. The function $\varphi+\rho_j+M$ is psh on $X_j$ and since $u\leq0$ we have that 
\[\varphi+\rho_j+M<u+\rho_j+M\leq\frac{1}{1+\varepsilon}\,u+\rho_j+M\,\text{ on } X_j.\]

Fix $c',\,1<c'<c$. Note that the function $(1+\varepsilon)^{-1}u+\rho_j+M\geq0$ is a continuous psh exhaustion function of $V_j$. It follows from \cite[Theorem A]{CGZ13} that there exists a psh function $\widetilde\psi_j$ on $V_j$ such that 
\[\widetilde\psi_j<\frac{c'}{1+\varepsilon}\,u+c'\rho_j+c'M\;{\rm on}\;V_j,\,\;\widetilde\psi_j=\varphi+\rho_j+M\;{\rm on}\;X_j.\] 
The function 
\[\psi_j:=\widetilde\psi_j-c\rho_j-cM\]
is $c\omega$-psh on $V_j$ and, since $c'<c$, it verifies 
\[\psi_j<\frac{c}{1+\varepsilon}\,u+(c'-c)\rho_j+(c'-c)\big(\frac{1}{1+\varepsilon}\,u+M\big)\leq\frac{c}{1+\varepsilon}\,u\leq0.\]
Since $\rho_j(z)\to+\infty$ as $V_j\ni z\to H_j$ and since $u$ is continuous on $V$, we conclude that $\psi_j$ extends to a $c\omega$-psh function $\psi_j$ on $V$ such that 
\[\psi_j\leq\frac{c}{1+\varepsilon}\,u\,\text{ on } V,\,\;\psi_j=-\infty\,\text{ on } H_j,\,\;\psi_j=\varphi-(c-1)\rho_j-(c-1)M\,\text{ on } X_j.\]
Hence $\psi_j=\varphi+(c-1)\theta_j-(c-1)M$ on $X$, as both sides are equal to $-\infty$ on $X\cap H_j$. 

We define $\psi:=\max_{1\leq j\leq N'}\psi_j$. Then $\psi$ is $c\omega$-psh on $V$ and 
\[\psi\leq\frac{c}{1+\varepsilon}\,u\,\text{ on } V,\,\;\psi=\varphi+(c-1)\max_{1\leq j\leq N'}\theta_j-(c-1)M\,\text{ on } X.\]
If $j>N'$ then $X\subset H_j$, so $\theta_j=-\infty$ on $X$. Therefore
\[\theta=\max_{1\leq j\leq N}\theta_j=\max_{1\leq j\leq N'}\theta_j,\,\;\psi=\varphi+(c-1)\theta-(c-1)M,\]
hold on $X$.
\end{proof}

\smallskip

\begin{proof}[Proof of Theorem \ref{T:mt}] Using Lemma \ref{L:subext}, we repeat the proof of \cite[Theorem 2.2]{CGZ13}. Assuming that $\max_X\varphi=-a$, where $a>0$, we have to find a negative $\omega$-psh function $\psi$ on $V$ with $\psi=\varphi$ on $X$. Let $X'$ be the union of the irreducible components $Y$ of $X$ such that $\varphi|_Y\not\equiv-\infty$. We construct by induction on $j\geq1$ a sequence of numbers $\varepsilon_j\searrow0$ and a sequence of negative smooth $(1+\varepsilon_j)\omega$-psh functions $\psi_j$ on $V$ such that, for all $j\geq2$, 
\[\frac{\psi_j}{1+\varepsilon_j}<\frac{\psi_{j-1}}{1+\varepsilon_{j-1}}\;\;{\rm on}\;V\;,\;\;\psi_{j-1}>\varphi\;{\rm on}\;X\;,\;\;\int_{X'}(\psi_j-\varphi)<\frac{1}{j}\;,\;\;\int_{W}\psi_j<-j\,,\]
for every irreducible component $W$ of $X$ where $\varphi|_W\equiv-\infty$. Here the integrals are with respect to the area measure on each irreducible component $X_j$ of $X$, i.e.
\[\int_Xf:=\sum_{X_j}\int_{X_j}f\,\omega^{\dim X_j}\,,\,\;|X_j|:=\int_{X_j}\omega^{\dim X_j}.\]

Let $\varepsilon_1=1$, $\psi_1=0$, and assume that $\varepsilon_{j-1},\,\psi_{j-1}$, where 
$j\geq2$, are constructed with the above properties. Since $\varphi<\psi_{j-1}|_X$ and the latter is continuous on the compact set $X$, we can find $\delta>0$ such that $\varphi<\psi_{j-1}-\delta$ on $X$. 

Let $c>1$. Lemma \ref{L:subext} implies that there exists a $c\omega$-psh function $\psi_c$ such that 
\[\frac{\psi_c}{c}\leq\frac{\psi_{j-1}-\delta}{1+\varepsilon_{j-1}}\;\;{\rm on}\;V\;,\;\;\psi_c=\varphi+(c-1)\theta-(c-1)M_{j-1}\;\;{\rm on}\;X,\]
where $M_{j-1}=\delta-\min_{\zeta\in V}\psi_{j-1}(\zeta)\geq0$. 
By \cite{BK07} and \cite{De92}, there exists a sequence of smooth $c\omega$-psh functions $\{\eta_k\}$ decreasing pointwise to $\psi_c$ on $V$. Let $\psi'_c:=\eta_k$, where $k$ is chosen sufficiently large such that $\psi'_c$ is a negative smooth $c\omega$-psh function on $V$ and it verifies the following: 
\begin{align*}
\frac{\psi'_c}{c}&<\frac{\psi_{j-1}-\frac{\delta}{2}}{1+\varepsilon_{j-1}}\;\,\text{ on $V$}, \\
\psi'_c&>\varphi+(c-1)\theta-(c-1)M_{j-1}\geq\varphi-(c-1)(m+M_{j-1})\;\,\text{ on $X$}, \\
\int_{X'}(\psi'_c-\varphi)&\leq\int_{X'}(\psi'_c-\varphi-(c-1)\theta+(c-1)M_{j-1})<c-1, \\
\int_{W}\psi'_c&<-j-(c-1)(m+M_{j-1})|W|,
\end{align*}
where $W$ is as above. The last two requirements are possible thanks to the dominated, respectively monotone, convergence theorems.

Let $\psi''_c:=\psi'_c+(c-1)(m+M_{j-1})$. Then on $V$ we have that 
\[\frac{\psi''_c}{c}<\frac{\psi_{j-1}-\frac{\delta}{2}}{1+\varepsilon_{j-1}}+\frac{(c-1)(m+M_{j-1})}{c}<\frac{\psi_{j-1}}{1+\varepsilon_{j-1}}-\frac{\delta}{4}+(c-1)(m+M_{j-1}).\]
Moreover, $\psi''_c>\varphi$ on $X$ and 
\begin{align*}
\int_{X'}(\psi''_c-\varphi)&=\int_{X'}(\psi'_c-\varphi)+(c-1)(m+M_{j-1})|X'|<(c-1)(1+m|X'|+M_{j-1}|X'|),\\
\int_{W}\psi''_c&=\int_{W}\psi'_c+(c-1)(m+M_{j-1})|W|<-j.
\end{align*}
If $\varepsilon_j>0$ is such that $\varepsilon_j<\varepsilon_{j-1}/2$, $\varepsilon_j(m+M_{j-1})<\delta/4$, $\varepsilon_j(1+m|X'|+M_{j-1}|X'|)<j^{-1}$, and $\psi_j:=\psi''_{1+\varepsilon_j}$, then $\varepsilon_j,\,\psi_j$ have the desired properties. 

We conclude that  $\varphi_j=(1+\varepsilon_j)^{-1}\psi_j$ is a decreasing sequence of negative smooth $\omega$-psh functions on $V$ such that $\varphi_j>(1+\varepsilon_j)^{-1}\varphi>\varphi$ on $X$. Hence
$\psi=\lim_{j\to\infty}\varphi_j$ is a negative $\omega$-psh function on $V$ and $\psi\geq\varphi$ on $X$. Note that 
\[\int_{X'}(\varphi_j-\varphi)=\frac{1}{1+\varepsilon_j}\int_{X'}(\psi_j-\varphi)-\frac{\varepsilon_j}{1+\varepsilon_j}\int_{X'}\varphi\to0\,,\,\;\int_{W}\varphi_j=\frac{1}{1+\varepsilon_j}\int_{W}\psi_j\to-\infty\,,\]
as $j\to+\infty$, hence $\psi=\varphi$ on $X$. 
\end{proof}

\section{Zariski open Stein covers}  \label{S:ex}

We study in this section the existence of adapted Zariski-open Stein covers on various types of K\"ahler manifolds. Since the extension property requires the existence of closed subvarieties of positive dimension, it is natural to explore in particular the case of projective varieties (many K\"ahler manifolds -e.g.\ generic tori or very generic K3 surfaces- do not admit 
any closed subvariety of positive dimension).

\subsection{Basic properties}\label{SS:BP}

We establish here the proof of Proposition \ref{P:cc}, and some of its consequences.

\smallskip

 $(i)$ Let $(V,\omega)$ be a compact K\"ahler manifold that satisfies condition (ZOS),
 and let $\omega'$ be another K\"ahler form cohomologous to $\omega$.
  By the $\ddbar$-Lemma, $\omega'=\omega+dd^c\eta$ for a smooth function $\eta:V\to\mathbb R$. Let $\{V_j\}_j$ be a Zariski-open Stein cover of $V$ such that $\omega=dd^c\rho_j$ on $V_j$, where $\rho_j$ is a smooth strictly psh exhaustion function on $V_j$. Then $\omega'=dd^c(\rho_j+\eta)$ and $\rho_j+\eta$ is a smooth strictly psh exhaustion function on $V_j$, since $\eta$ is globally bounded on $V$.
  Thus $(V,\omega')$ also satisfies condition (ZOS).
 
 \smallskip

$(ii)$ Let $\omega_1,\omega_2$ be K\"ahler forms on $V$ such that $(V,\omega_1),(V,\omega_2)$ verify condition (ZOS). If $\omega=a_1\omega_1+a_2\omega_2$, where $a_1,a_2>0$, then $(V,\omega)$ verifies condition (ZOS). Indeed, assume that $V=\bigcup_{j=1}^{N_1}V_j=\bigcup_{k=1}^{N_2}W_k$, where $V_j,W_k$ are open Stein sets such that $H_j=V\setminus V_j$, $L_k=V\setminus W_k$ are analytic subvarieties of $V$. Moreover, $\omega_1=dd^c\rho_j$ on $V_j$, $\omega_2=dd^c\eta_k$ on $W_k$, where $\rho_j,\eta_k$ are smooth strictly psh exhaustion functions on $V_j$, respectively on $W_k$. Then $V\setminus(V_j\cap W_k)=H_j\cup L_k$ is a proper analytic subvariety of $V$. Moreover, the function $\tau_{jk}:=a_1\rho_j+a_2\eta_k$ is a smooth strictly psh exhaustion function on $V_j\cap W_k$, hence $V_j\cap W_k$ is Stein. It follows that $\{V_j\cap W_k\}_{j,k}$ is an adapted Zariski-open Stein cover for $(V,\omega)$.
 
\smallskip

$(iii)$ If $(V,\omega)$ is a compact K\"ahler manifold which verifies condition (ZOS) and if $W$ is a (closed) complex submanifold of $V$ then it is easy to see that $(W,\omega|_W)$ verifies condition (ZOS), by intersecting the open sets $V_j$ with $W$.

\smallskip

$(iv)$ If $V$ is a projective manifold and $\omega$ is a Hodge form on $V$ then $(V,\omega)$ 
verifies condition (ZOS). Indeed, replacing $\omega$ by $k \omega$, we can assume that $V$ is an algebraic submanifold of the complex projective space ${\mathbb P}^N$, and $\omega=\omega_{FS}|_V$ is the Fubini-Study K\"ahler form. 
Using the canonical covering by affine charts ${\mathbb C}^N\hookrightarrow{\mathbb P}^N$, 
and the Fubini-Study potentials $\rho(z)=\log\sqrt{1+\|z\|^2}$, $z\in{\mathbb C}^N$, 
we have that $({\mathbb P}^N,\omega_{FS})$ verifies condition (ZOS), hence so does $(V,\omega)$ by 
the previous restriction property.

\smallskip

$(v)$ If $V$ is a projective manifold and $\{\omega\}=\sum_{\ell=1}^ma_\ell  \{ \omega_\ell \}$, where $a_\ell>0$ and 
$\{\omega_\ell\}$ are Hodge classes on $V$, then $(V,\omega)$ verifies condition (ZOS), as follows from the previous observations. It is easy to see that any K\"ahler class $\{\omega\}$ in the real Neron-Severi space $NS_{\R}(V)$ can be expressed as a convex combination of Hodge classes. For the convenience of the reader we include a proof in Lemma \ref{L:NS} below. Hence $(V,\omega)$ satisfies condition (ZOS) if the K\"ahler class $\{\omega\}\in NS_{\R}(V)$.

\begin{Lemma}\label{L:NS}
Let $(V,\omega)$ be a compact K\"ahler manifold such that $\{\omega\}\in NS_{\R}(V)$. Then $V$ is projective and $\{\omega\}=\sum_{\ell=1}^m a_\ell\vartheta_\ell\,$, where $a_\ell>0$ and $\vartheta_\ell$ are Hodge classes on $V$.
\end{Lemma}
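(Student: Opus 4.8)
The strategy is to combine three standard facts: the K\"ahler cone $\mathcal{K}_V\subset H^{1,1}(V,\R)$ is an open convex cone; the rational classes $NS(V)\otimes\Q$ are dense in $NS_{\R}(V)$; and an integral $(1,1)$-class carrying a positive representative is the first Chern class of an ample line bundle (Lefschetz theorem on $(1,1)$-classes, together with Kodaira's embedding theorem).

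First I would observe that $\mathcal{A}:=\mathcal{K}_V\cap NS_{\R}(V)$ is a non-empty open convex cone in the finite-dimensional vector space $NS_{\R}(V)$: it contains $\{\omega\}$ by hypothesis, and it is open because $\mathcal{K}_V$ is open in $H^{1,1}(V,\R)$ --- positivity of a smooth closed $(1,1)$-form on the compact manifold $V$ is a $\mathcal{C}^0$-open condition, and every real $(1,1)$-class has a smooth closed representative, so a small perturbation of a K\"ahler class is again K\"ahler --- while $NS_{\R}(V)$ is a linear subspace. Since $NS(V)$ is a full lattice in $NS_{\R}(V)$, the open set $\mathcal{A}$ contains a rational class $\beta$; clearing denominators, $N\beta\in NS(V)$ is an integral $(1,1)$-class lying in $\mathcal{K}_V$, hence $N\beta=c_1(L)$ for an ample line bundle $L$, and $V$ is projective by Kodaira's embedding theorem.

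For the decomposition, I would fix a basis $e_1,\dots,e_r$ of $NS(V)$ and set $v_0:=-(e_1+\cdots+e_r)$ and $v_i:=e_i$ for $1\le i\le r$, so that $\sum_{i=0}^{r}v_i=0$. Since $\mathcal{A}$ is open and contains $\{\omega\}$, for a sufficiently small rational number $t>0$ all the classes $q^{(i)}:=\{\omega\}+t\,v_i$, $0\le i\le r$, still lie in $\mathcal{A}$, and they are rational. Because $\sum_i v_i=0$ one gets $\{\omega\}=\frac{1}{r+1}\sum_{i=0}^{r}q^{(i)}$. Each $q^{(i)}$ is a rational K\"ahler class in $NS_{\R}(V)$, so there is a positive integer $N_i$ with $\vartheta_i:=N_i\,q^{(i)}\in NS(V)\cap\mathcal{K}_V$; as in the previous step $\vartheta_i=c_1(L_i)$ for an ample line bundle $L_i$, i.e. $\vartheta_i$ is a Hodge class. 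Hence $\{\omega\}=\sum_{i=0}^{r}a_i\,\vartheta_i$ with $a_i:=\frac{1}{(r+1)N_i}>0$, which is the asserted representation.

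There is no serious obstacle here; the only point to arrange with a little care is the choice of the auxiliary classes $v_i$ summing to zero, which upgrades the mere density of $\mathcal{A}\cap(NS(V)\otimes\Q)$ near $\{\omega\}$ to an honest expression of $\{\omega\}$ as a \emph{positive} combination of rational K\"ahler classes. The remaining ingredients --- openness of the K\"ahler cone, density of rational N\'eron--Severi classes, and the identification of integral positive $(1,1)$-classes with ample line bundles --- are classical.
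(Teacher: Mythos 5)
The first half of your argument --- producing a rational class in the open cone $\mathcal{A}=\mathcal{K}_V\cap NS_{\R}(V)$, clearing denominators, and invoking Lefschetz $(1,1)$ plus Kodaira to conclude that $V$ is projective --- is correct and is essentially what the paper does. The decomposition step, however, contains a genuine error: you assert that the classes $q^{(i)}:=\{\omega\}+t\,v_i$ are rational. They are not. The class $\{\omega\}$ is an arbitrary element of $NS_{\R}(V)$ (if it were rational the lemma would be immediate), and adding a rational vector $t\,v_i$ to it does not make it rational; in general no integer multiple $N_i q^{(i)}$ lies in $NS(V)$, so the Hodge classes $\vartheta_i=N_i q^{(i)}$ you need simply do not exist. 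Since handling irrational classes is the entire content of the lemma, this is a gap at the crux of the argument, not a cosmetic slip.

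The underlying idea is salvageable. The points $\{\omega\}+t\,v_i$, $0\le i\le r$, are the vertices of a nondegenerate simplex in $NS_{\R}(V)$ whose barycenter is $\{\omega\}$, so $\{\omega\}$ lies in the \emph{interior} of their convex hull; replacing each vertex by a sufficiently close genuinely rational point of $\mathcal{A}$ (possible by density) keeps $\{\omega\}$ in that open interior, whence $\{\omega\}=\sum_i\lambda_i q^{(i)}$ with real $\lambda_i>0$ (no longer equal to $\frac{1}{r+1}$), and the rest of your argument then goes through. The paper sidesteps the issue by a different, slightly slicker device: using the openness of $\mathcal{A}$ it first shows that $NS_{\R}(V)$ admits a basis of Hodge classes $\vartheta_j$, writes $\{\omega\}=\sum_j c_j\vartheta_j$ with $c_j\in\R$, and chooses rationals $r_j<c_j$ so close to $c_j$ that $\sum_j r_j\vartheta_j$ is still a rational K\"ahler class, hence of the form $N^{-1}\vartheta$ with $\vartheta$ Hodge; the remainder $\sum_j(c_j-r_j)\vartheta_j$ then has strictly positive coefficients, giving the asserted representation.
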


\begin{proof} Let $\mathcal K_V\subset H^{1,1}(V,\R)$ denote the set of K\"ahler classes on $V$. Then $\mathcal K_V$ is a convex cone and it is open. Let $c_1(L_j)$, $L_j\in\Pic(V)$, $1\leq j\leq k$, be a basis of $NS_{\R}(V)$, so 
\[\{\omega\}=\sum_{j=1}^k c_jc_1(L_j)\,,\text{ where $c_j\in\R$.}\]
Since $\mathcal K_V$ is open it follows that there exists $\varepsilon>0$ such that the class 
\[\{\omega'\}=\sum_{j=1}^k r_jc_1(L_j)\in\mathcal K_V\,, \text{ if $r_j\in\Q$, $|r_j-c_j|<\varepsilon$.}\]
Note that $\{\omega'\}=N^{-1}\vartheta$, where $N\in\N$ and $\vartheta$ is a Hodge class on $V$, hence $V$ is projective. Since $\mathcal K_V\cap NS_{\R}(V)\neq\emptyset$ is open in $NS_{\R}(V)$, we have that $NS_{\R}(V)$ has a basis of K\"ahler classes, and hence a basis of Hodge classes, by the preceding discussion. So we can write 
\[\{\omega\}=\sum_{j=1}^k c_j\vartheta_j\,,\text{ where $\vartheta_j$ are Hodge classes and $c_j\in\R$.}\]
Choosing $r_j\in\Q$, $r_j<c_j$ sufficiently close to $c_j$, we infer by above that 
\[\{\omega\}=\sum_{j=1}^k r_j\vartheta_j+\sum_{j=1}^k a_j\vartheta_j=N^{-1}\vartheta+\sum_{j=1}^k a_j\vartheta_j\,,\]
where $a_j=c_j-r_j>0$, $N\in\N$ and $\vartheta$ is a Hodge class on $V$.
\end{proof}

\smallskip

$(vi)$ Let $(V^1,\omega_1),(V^2,\omega_2)$ be compact K\"ahler manifolds which verify condition (ZOS). Then $(V^1\times V^2,\omega)$ verifies condition (ZOS), where $\omega=\pi_1^\star\omega_1+\pi_2^\star\omega_2$ and 
$$
\pi_j:V^1\times V^2\to V^j
$$
 are the canonical projections.  Indeed, if $V^1=\bigcup_kV^1_k$ and $V^2=\bigcup_\ell V^2_\ell$ are adapted Zariski-open Stein covers for $(V^1,\omega_1)$ and $(V^2,\omega_2)$, then it is easy to see that $\{V^1_k\times V^2_\ell\}_{k,\ell}$ is an adapted Zariski-open Stein cover for $(V^1\times V^2,\omega)$.

\smallskip

$(vii)$ Let $V,W$ be compact K\"ahler manifolds and $f:W\to V$ be a (unbranched) holomorphic covering map. Assume that $\omega$ is a K\"ahler form on $V$ such that $(V,\omega)$ verifies condition (ZOS). Then $(W,f^\star\omega)$ verifies condition (ZOS). Indeed, since $W$ is compact $f$ is a finite map. If $U\subset V$ is an open Stein set such that $H=V\setminus U$ is an analytic subvariety of $V$ and if $\rho$ is a smooth strictly psh exhaustion function on $U$, then $\rho\circ f$ is a smooth strictly psh exhaustion function on $f^{-1}(U)$. Moreover $f^{-1}(H)$ is a proper analytic subvariety of $W$. 

\smallskip

In the case of a {\em branched} holomorphic cover $f:W\to V$, and with the above notation, it still holds that $f^{-1}(U)$ is a Zariski-open Stein set (see e.g.\ \cite[p.\ 49]{For17}). However in this case the cohomology class $\{f^\star\omega\}$ is merely semi-positive.
As noted by Matsumura \cite{Mat13}, strict positivity is required to expect the extension property to hold true
(see Proposition \ref{pro:matsumura}).

\subsection{The real Neron-Severi space}\label{SS:NS} 

A Fano manifold is a projective $n$-dimensional manifold $V$ whose anticanonical bundle
$-K_V$ is ample. It follows from the Kodaira-Nakano vanishing theorem that 
$H^{0,q}(V,\C)=0$ for $q>0$, so Hodge symmetry and Hodge decomposition theorem ensure that $H^2(V,\R)=H^{1,1}(V,\R)$. Hence
$$
NS_{\R}(V)=H^{1,1}(V,\R),
$$
and any K\"ahler class belongs to the real Neron-Severi space. Note also that in this case the map $c_1:H^1(V,\cO^\star_V)\cong\Pic(V)\to H^2(V,\Z)$ is an isomorphism. Thus condition (ZOS) is satisfied and the extension property holds for {\it any} K\"ahler class in this case.

Since $h^{0,2}(V)=\dim H^{0,2}(V,\C)$ is a birational invariant (see e.g.\ \cite{H77}), the extension property holds in particular on any {\it rational} manifold $V$ (i.e.\ birationally equivalent to $\P^n$), and hence on any toric manifold.

The Picard number $\rho(V)$ (rank of the Neron-Severi group) is always bounded from above 
by the Hodge number $h^{1,1}(V)$. 
Besides Fano manifolds, there are many manifolds
with maximal Picard number $\rho(V)=h^{1,1}(V)$, especially when $\dim_{\C} V \geq 3$.
We refer the interested reader to \cite{Be14}  for a list of examples and a classification
in complex dimension $2$. The extension property holds true for any K\"ahler class on such manifolds.

\smallskip

At the other extreme the Picard number is one  on a generic abelian variety, as well as on any generic projective $K3$ surface, so most K\"ahler classes do not belong
to the real Neron-Severi space and we do not know if the extension property holds for these.

\subsection{Projective toric manifolds}\label{SS:toric} 

Let $V$ be a compact toric manifold of dimension $n$. Then $V$ is a compactification of the complex torus $(\C^\star)^n$ such that the canonical action by multiplication of $(\C^\star)^n$ on itself extends to a holomorphic action of $(\C^\star)^n$ on $V$. As noted in Section \ref{SS:NS} the extension property is valid for any K\"ahler class on $V$, since $V$ is rational.

A toric K\"ahler form on $V$ is a K\"ahler form $\omega$ which is $(S^1)^n$-invariant. We give here an alternative proof that $(V,\omega)$ satisfies condition (ZOS). For this, we will need the following lemma:

\begin{Lemma}\label{L:toric} Let $f:[0,+\infty)^n\to\mathbb R$ be such that $\rho(z_1,\ldots,z_n):=f(|z_1|,\ldots,|z_n|)$ is a smooth strictly psh function on ${\mathbb C}^n$. Then $\rho$ is an exhaustion function for ${\mathbb C}^n$.
\end{Lemma}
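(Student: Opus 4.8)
The plan is to reduce everything to a one-variable statement and then to exploit a monotonicity property of $f$. First I would establish the following one-variable lemma: \emph{if $g\colon[0,+\infty)\to\R$ is such that $w\mapsto g(|w|)$ is smooth and strictly subharmonic on $\C$, then $g$ is non-decreasing on $[0,+\infty)$ and $g(t)\to+\infty$ as $t\to+\infty$.} In polar coordinates $\Delta\big(g(|w|)\big)=\tfrac1t\big(t\,g'(t)\big)'$ on $(0,+\infty)$, so strict subharmonicity means that $t\mapsto t\,g'(t)$ is strictly increasing there. If $g'(t_1)<0$ for some $t_1>0$, then $t\,g'(t)<t_1g'(t_1)<0$ for $0<t<t_1$, and integrating $g'(t)<t_1g'(t_1)/t$ over $(t,t_1)$ would force $g(t)\to+\infty$ as $t\to0^+$, contradicting the continuity of $g$ at $0$; hence $g'\ge0$. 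As $t\mapsto t\,g'(t)$ is then nonnegative and strictly increasing, there are $t_0,\delta>0$ with $t\,g'(t)\ge\delta$ for $t\ge t_0$, so $g'(t)\ge\delta/t$ and $g(R)\ge g(t_0)+\delta\log(R/t_0)\to+\infty$.

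Next I would use this to show that $f$ is non-decreasing in each of its variables. Fixing an index $k$ and complex numbers $z_\ell$ for $\ell\ne k$, the restriction of $\rho$ to the complex line $w\mapsto(z_1,\dots,z_{k-1},w,z_{k+1},\dots,z_n)$ is smooth, depends only on $|w|$, and is strictly subharmonic, since its Laplacian equals $4\rho_{z_k\bar z_k}>0$ (a diagonal entry of the positive definite complex Hessian of $\rho$). Applying the lemma to $g(t):=f(|z_1|,\dots,|z_{k-1}|,t,|z_{k+1}|,\dots,|z_n|)$ shows this function is non-decreasing; letting the $z_\ell$ vary, $f$ is non-decreasing in each variable. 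Applying the lemma instead along the $j$-th coordinate axis, the function $F_j(t):=f(0,\dots,0,t,0,\dots,0)$ (with $t$ in the $j$-th slot) is non-decreasing and $F_j(t)\to+\infty$.

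Finally I would conclude as follows. Given $c\in\R$, choose $R_0>0$ so that $F_j(t)>c$ for every $t\ge R_0$ and every $j$. If $|z|>\sqrt n\,R_0$, pick $j$ with $|z_j|\ge|z|/\sqrt n>R_0$; then monotonicity in each variable gives $\rho(z)=f(|z_1|,\dots,|z_n|)\ge f(0,\dots,0,|z_j|,0,\dots,0)=F_j(|z_j|)>c$. Hence $\{\rho\le c\}\subset\{\,|z|\le\sqrt n\,R_0\,\}$, and being closed (by continuity of $\rho$) it is compact, so $\rho$ is an exhaustion function of $\C^n$. The computations here are all elementary; the only mild subtlety is the behaviour at the coordinate hyperplanes (one needs the one-variable slices of $\rho$ to be continuous up to, and smooth and strictly subharmonic away from, the origin, which is immediate from the smoothness and strict plurisubharmonicity of $\rho$), and the real content is the simple but crucial observation that monotonicity of $f$ in each variable turns the one-variable growth along the coordinate axes into the required uniform growth of $\rho$ at infinity.
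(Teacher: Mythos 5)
Your proposal is correct and follows essentially the same route as the paper: establish that $f$ is non-decreasing in each variable, derive logarithmic growth along the coordinate axes from strict positivity of the diagonal entries of the complex Hessian, and combine the two via monotonicity. Your polar-coordinate identity for $t\,g'(t)$ is exactly the integrated form of the Green's formula the paper uses, and the only cosmetic difference is that the paper gets monotonicity from the maximum principle on polydiscs rather than from the radial ODE and continuity at the origin.
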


\begin{proof} It follows from the maximum principle applied to $\rho$ on polydiscs that for each $1\leq j\leq n$ and for fixed $x_k\geq0$, $k\neq j$, the function $x_j\to f(x_1,\ldots,x_j,\ldots,x_n)$ is increasing. For $x_j>0$ and $x_k\geq0$, $k\neq j$, we obtain using Green's formula that 
\begin{equation}\label{e:av}
\begin{split}
\frac{\partial}{\partial x_j}\,f(x_1,\ldots,x_n)&=\frac{1}{2\pi}\,\frac{\partial}{\partial x_j}\,\int_0^{2\pi}\rho(x_1,\ldots,x_je^{it},\ldots,x_n)\,dt \\
&=\frac{2}{\pi x_j}\int_{\{|z_j|\leq x_j\}}\frac{\partial^2\rho}{\partial z_j\partial\overline z_j}\,(x_1,\ldots,z_j,\ldots,x_n)\,d\lambda(z_j),
\end{split}
\end{equation}
where $\lambda$ is the Lebesgue measure on $\mathbb C$. Let 
\[m:=\min\Big\{\frac{\partial^2\rho}{\partial z_j\partial\overline z_j}\,(0,\ldots,z_j,\ldots,0):\,|z_j|\leq1,\,1\leq j\leq n\Big\}.\]
We have $m>0$ since $\rho$ is strictly psh. Equation \eqref{e:av} implies that 
\[\frac{\partial}{\partial x_j}\,f(0,\ldots,x_j,\ldots,0)\geq\frac{2m}{x_j}\]
holds for $x_j\geq1$. Since $f$ is increasing in each variable we infer that 
\[f(x_1,\ldots,x_j,\ldots,x_n)\geq f(0,\ldots,x_j,\ldots,0)\geq2m\log x_j+f(0),\]
for every $1\leq j\leq n$, $x_j\geq 1$, and $x_k\geq 0$ for $k\neq j$. Hence 
\[\rho(z)\geq 2m\log^+\max\{|z_1|,\ldots,|z_n|\}+\rho(0),\;\forall\,z\in{\mathbb C}^n.\]
\end{proof}

\begin{Proposition}\label{P:toric} Let $V$ be a projective toric manifold of dimension $n$ equipped with a toric K\"ahler form $\omega$. Then $(V,\omega)$ verifies condition (ZOS).
\end{Proposition}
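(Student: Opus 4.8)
The plan is to take the standard affine charts of the toric variety as the adapted Zariski-open Stein cover, and on each of them to produce a rotation-invariant potential for $\omega$ to which Lemma \ref{L:toric} applies directly.

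First I would set up the toric geometry. Let $\Delta$ be the fan of $V$ in $N_{\mathbb R}\cong{\mathbb R}^n$, and let $\sigma_1,\dots,\sigma_N$ be its finitely many maximal cones. Since $V$ is smooth, each $\sigma_j$ is generated by a $\mathbb Z$-basis of the lattice $N$, so the associated affine chart $V_j:=U_{\sigma_j}$ is biholomorphic to ${\mathbb C}^n$, and $V=\bigcup_{j=1}^N V_j$. Each $V_j$ is Zariski-open in $V$, hence $H_j:=V\setminus V_j$ is a Zariski-closed, hence analytic, subvariety of $V$, and it is proper since $V_j\neq\emptyset$. Moreover, in the coordinates $z=(z_1,\dots,z_n)$ on $V_j\cong{\mathbb C}^n$ attached to the chart, the action of the big torus $T\cong(\C^\star)^n$ is by coordinatewise multiplication, so its maximal compact subgroup $(S^1)^n$ acts on $V_j\cong{\mathbb C}^n$ by $(\theta,z)\mapsto(e^{i\theta_1}z_1,\dots,e^{i\theta_n}z_n)$.

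Next I would construct the potentials. Fix $j$. Since $V_j\cong{\mathbb C}^n$ is Stein and contractible, the $\ddbar$-Lemma provides a smooth function $v$ on $V_j$ with $dd^c v=\omega$. As $\omega$ is a toric K\"ahler form it is $(S^1)^n$-invariant, so replacing $v$ by its average over $(S^1)^n$ — which leaves $dd^c v$ unchanged — we may assume $v$ is $(S^1)^n$-invariant; by the description of the action this means $v(z)=f_j(|z_1|,\dots,|z_n|)$ for some $f_j:[0,+\infty)^n\to{\mathbb R}$, and $v$ is smooth and strictly psh on ${\mathbb C}^n$ because $\omega$ is a K\"ahler form. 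Lemma \ref{L:toric} then shows that $v$ is an exhaustion function of $V_j\cong{\mathbb C}^n$. Since an exhaustion function is bounded below, after adding a constant we obtain a smooth nonnegative strictly psh exhaustion $\rho_j$ of $V_j$ with $dd^c\rho_j=\omega$. Hence $\{V_j\}_{1\le j\le N}$ is an adapted Zariski-open Stein cover and $(V,\omega)$ verifies condition (ZOS).

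The verifications that averaging over the compact group $(S^1)^n$ preserves smoothness and strict positivity, and that $H_j$ is a genuine analytic subvariety, are routine. The only genuinely substantial point is the exhaustion property on each chart, which is precisely Lemma \ref{L:toric}; the main thing to be careful about is therefore the toric bookkeeping that identifies the intrinsic compact-torus action with standard coordinatewise rotation on each copy of ${\mathbb C}^n$, after which the result follows by a direct application of that lemma.
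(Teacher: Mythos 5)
Your proposal is correct and takes essentially the same route as the paper: cover $V$ by the affine charts biholomorphic to ${\mathbb C}^n$ attached to the maximal cones (the paper quotes \cite[Proposition 4.4]{ALZ} for this identification and the intertwining of the torus action with coordinatewise multiplication, rather than deriving it from the fan), average a global potential of $\omega$ over $(S^1)^n$ to obtain a polyradial smooth strictly psh potential, and invoke Lemma \ref{L:toric} for the exhaustion property. The only differences are expository, and your added remark that the exhaustion is bounded below (so one may normalize $\rho_j\geq0$) is a correct, routine point left implicit in the paper.
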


\begin{proof} Let $T:=(\C^\star)^n$ be the complex torus in $V$, and let $p_1,\ldots,p_N$ be the toric points of $V$. By \cite[Proposition 4.4]{ALZ} we have that $V=\bigcup_{j=1}^NV_j$, where the sets $V_j\ni p_j$ are open, $T\subset V_j$, $V\setminus V_j$ are analytic subvarieties of $V$, and the following properties hold: For $1\leq j\leq N$, there exists a biholomorphic map $\Phi_j:V_j\to\C^n$ such that $\Phi_j(p_j)=0$, $\Phi_j(T)=(\C^\star)^n\subset{\mathbb C}^n$, $V_j$ is invariant under the action of $T$ and the action of $T$ on $V_j$ coincides via $\Phi_j$ with the canonical action by multiplication of $(\C^\star)^n$ on ${\mathbb C}^n$. 

Set $\omega_j=(\Phi_j^{-1})^\star(\omega|_{V_j})$. Then $\omega_j$ is a K\"ahler form on ${\mathbb C}^n$, invariant under the canonical action of $(S^1)^n$ on ${\mathbb C}^n$. We have $\omega_j=dd^cv_j$ for some smooth strictly psh function $v_j$ on ${\mathbb C}^n$. It follows that 
\[w_j(z_1,\ldots,z_n):=\frac{1}{(2\pi)^n}\,\int_{[0.2\pi]^n}v_j(e^{i\theta_1}z_1,\ldots,e^{i\theta_n}z_n)\,d\theta_1...d\theta_n\]
is a smooth, polyradial, strictly psh function such that $dd^cw_j=\omega_j$. Lemma \ref{L:toric} shows that $w_j$ is an exhaustion function for ${\mathbb C}^n$. Thus $\rho_j:=w_j\circ\Phi_j$ is a smooth strictly psh exhaustion function on $V_j$ with $dd^c\rho_j=\omega$.
\end{proof}

For a brief description of the construction of $n$-dimensional projective toric manifolds $V$, and of very ample toric line bundles on $V$, from fans of cones in ${\mathbb R}^n$ we refer to \cite[Section 4]{ALZ} (see also \cite{Ful}). 

By Lemma \ref{L:toric} (and its proof) it follows that every smooth strictly psh function $\rho$ on $\C^n$ which is polyradial is an exhaustion, and in fact it grows at least like $c\log\|z\|$ for some constant $c>0$. We conclude this section by noting that the hypothesis of Lemma \ref{L:toric} that $\rho$ is polyradial is necessary. Namely, we give below an example of a smooth strictly psh function on $\C^n$ which is not an exhaustion, and an example of a smooth strictly psh exhaustion function on $\C^n$ which does not grow at least logarithmically (i.e.\ it is not bounded below by $c\log\|z\|$, for any constant $c>0$).

\begin{Example} Let $w_j=(j,0,\ldots,0)\in\mathbb C^n$, $j\geq1$, and $S=\{w_1,w_2,\ldots\}$. The function 
\[\displaystyle u(z)=\sum_{j=1}^\infty\frac{1}{2^{j+1}}\log(3\|z-w_j\|)\]
is psh on $\C^n$ and $\cC^\infty$-smooth on $\C^n\setminus S$. Moreover $u(z)\leq\frac{1}{2}\log^+\|z\|+C$ on $\C^n$ for some constant $C$, and $u(z)\geq0$ on $D:=\C^n\setminus\big(\bigcup_{j\geq1} B_j)$, where $B_j$ is the open ball in $\C^n$ centered at $w_j$ and of radius $\frac{1}{3}$. Let $\{a_j\}_{j\geq1}$ be a sequence such that $a_j<0$. By a regularized maximum construction on each $B_j$ we obtain a $\cC^\infty$-smooth psh function $v$ on $\C^n$ which has the following properties:

(i) $v=u$ on $D$ and $v(z)\leq\frac{1}{2}\log^+\|z\|+C$ on $\C^n$ for some constant $C$.

(ii) $v\geq a_j$ on $B_j$ and $v(w_j)=a_j$. 

Let $\rho(z)=\frac{1}{4}\log(1+\|z\|^2)+v(z)$. Then $\rho$ is a smooth strictly psh function $\C^n$ and by (i), 
\[\rho(z)=\frac{1}{4}\log(1+\|z\|^2)+u(z)\geq\frac{1}{4}\log(1+\|z\|^2) \text{ on $D$.}\]
Moreover $\rho$ is in the Lelong class, i.e.\ $\rho(z)\leq\log^+\|z\|+C$ on $\C^n$ for some constant $C$. Choosing $a_j=-\frac{1}{4}\log(1+j^2)$ we have $\rho(w_j)=0$ so $\rho$ is not an exhaustion. If we take $a_j=\log\log(j+1)-\frac{1}{4}\log(1+j^2)$ we obtain a smooth strictly psh exhaustion function $\rho$ such that $\rho(w_j)=\log\log(j+1)$. 
\end{Example}

\subsection{Positivity vs.\ extension property}\label{SS:Matsumura}

We finally study a necessary condition for a cohomology class $\alpha$ to have the extension property.
Recall that a class $\alpha \in H^{1,1}_{psef}(V,\R)$ is pseudo-effective if it is the deRham cohomology class of a positive closed current $T$ of bidegree $(1,1)$. Fixing $\theta \in \alpha$ a smooth closed $(1,1)$-form, it follows from the $\partial\overline{\partial}$-lemma that $T=\theta+i\partial\overline{\partial} \varphi$, for some function $\varphi \in PSH(V,\theta)$. Recall that a class $\alpha \in H^{1,1}(V,\R)$ is numerically effective (nef) if it lies in the closure of the K\"ahler cone of $V$. 

\begin{Definition}\label{D:ext}
We say that a class $\alpha=\{\theta\} \in H^{1,1}_{psef}(V,\R)$ has the {\em extension property} if $PSH(V,\theta)|_X=PSH \left(X,\theta|_X\right)$ holds for any irreducible analytic subset $X \subset V$. We say that $\alpha$ has the {\em bounded extension property} if, for any irreducible analytic subset $X\subset V$, any $\theta|_X$-psh function on $X$ that is bounded below near some point $p\in X$ has a $\theta$-psh extension to $V$ that is bounded below near $p$ in $V$.
\end{Definition}

Inspired by Matsumura \cite{Mat13} we prove the following result:

\begin{Proposition} \label{pro:matsumura}
Let $V$ be a projective manifold of dimension $n\geq2$ and let $\alpha \in H^{1,1}_{psef}(V,\R)$, $\alpha\neq0$, be a pseudo-effective class.

(i) If $\alpha$ has the extension property then $\alpha$ is nef.

(ii) If $\alpha$ has the bounded extension property then $\alpha$ is a K\"ahler class.
\end{Proposition}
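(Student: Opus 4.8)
\textbf{Proof proposal for Proposition \ref{pro:matsumura}.}

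The plan is to follow Matsumura's strategy \cite{Mat13}: use the fact that a pseudo-effective class which is not nef (resp. not K\"ahler) must contain an obstruction — a curve or subvariety on which the class is ``too negative'' — and then produce a $\theta|_X$-psh function on that subvariety whose extension would violate the failure of nef-ness (resp. K\"ahler-ness). For $(i)$, suppose $\alpha$ is psef but not nef. By the characterization of nef classes on projective manifolds (Boucksom--Demailly--Paun--Peternell, or more elementarily Seshadri-type arguments), there is an irreducible curve $C\subset V$ with $\alpha\cdot C<0$. Restricting to a suitable subvariety $X$ containing $C$ (or taking $X=C$ after normalizing, or more conveniently an $X$ of dimension $\ge 1$ built around the negative locus), the restricted class $\alpha|_X$ is not psef, so $PSH(X,\theta|_X)$ contains functions that are genuinely unbounded in a way incompatible with any global $\theta$-psh bound. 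The key point: if $\alpha\cdot C<0$ then $\theta|_C$ cannot carry a closed positive current, hence there is great freedom in choosing $\varphi\in PSH(X,\theta|_X)$, and in particular one can arrange singularities (Lelong numbers, or behavior along $C$) that no $\psi\in PSH(V,\theta)$ can match, since the global current $\theta+dd^c\psi\ge 0$ forces $\int_C(\theta+dd^c\psi)=\alpha\cdot C\ge 0$ once one checks the restriction current is well-defined — a Siu-type semicontinuity / restriction argument. This contradiction with the extension property yields nef-ness.

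For $(ii)$, assume $\alpha$ is psef and nef (by part $(i)$) but not K\"ahler. Then $\alpha$ lies on the boundary of the K\"ahler cone, so by Demailly--Paun's characterization of the K\"ahler cone there is an irreducible analytic subset $Y\subset V$ with $\int_Y\alpha^{\dim Y}=0$ (a ``null locus'' of $\alpha$, cf.\ also Collins--Tosatti). Pick $X=Y$ (irreducible). Since $\alpha|_X$ has vanishing top self-intersection, it is nef but not big on $X$; one then constructs a $\theta|_X$-psh function $\varphi$ on $X$ that is bounded below near a smooth point $p\in X$ (so the \emph{bounded} extension property applies) yet whose only $\theta$-psh extensions to $V$ must be unbounded below near $p$. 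The mechanism: a bounded-below $\theta$-psh extension $\psi$ would have $\theta+dd^c\psi$ a closed positive current with bounded potential near $p$, and restricting/analyzing its Monge--Amp\`ere-type mass on $X$ would force $\int_X(\theta|_X+dd^c\varphi)^{\dim X}>0$, contradicting $\int_X\alpha^{\dim X}=0$. Equivalently, one uses that on the null locus any bounded extension produces, after Demailly regularization, a K\"ahler-like lower bound that is excluded. Thus the bounded extension property forces $\int_Y\alpha^{\dim Y}>0$ for every irreducible $Y$, i.e.\ $\alpha$ is K\"ahler.

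The main obstacle, and the step I would expect to require the most care, is making rigorous the ``restriction of the global positive current to $X$'' and the resulting intersection-number inequality: a priori $\theta+dd^c\psi$ is only a positive closed current and need not restrict nicely to a singular (or even smooth) subvariety $X$, especially where $\psi$ has poles. One must either work with $X$ chosen to avoid the bad set, or invoke Siu's semicontinuity and Fornaess--Sibony / Demailly pullback results to define $(\theta+dd^c\psi)|_X$ off a small set and control its mass, or — as in Matsumura — argue via resolution of singularities $\pi:\widetilde X\to X$ and pullback, then push the contradiction back down. In $(ii)$ there is the additional subtlety of ensuring the constructed $\varphi$ is genuinely bounded below at $p$ while still being ``large'' enough globally to force the contradiction; this is where the hypothesis $n\ge 2$ and the positive dimension of the null locus $Y$ are essential. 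I would also need to double-check the precise statement of the Demailly--Paun K\"ahler cone theorem that guarantees an \emph{analytic} (not merely numerical) null locus — this is exactly the point the referee flagged in the acknowledgements, so the argument must be phrased to use a genuine irreducible subvariety $Y$ with $\int_Y\alpha^{\dim Y}=0$.
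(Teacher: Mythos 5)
Your proposal runs in the opposite direction from the paper's proof, and the contrapositive route you choose has two genuine gaps. For (i), you assume that a pseudo-effective class which is not nef must satisfy $\alpha\cdot C<0$ for some irreducible curve $C$. That dual characterization of nefness by curves is only valid for classes in $NS_{\R}(V)$ (Kleiman); for a transcendental class $\alpha\in H^{1,1}(V,\R)$ there need be no curve detecting the failure of nefness, and no result of Demailly--Paun or BDPP supplies one. Worse, even if such a $C$ existed, your mechanism inverts the logic of the extension property: if $\int_C\alpha<0$ then $\alpha|_C$ carries no closed positive current, so $PSH(C,\theta|_C)$ is \emph{empty} --- there is nothing to extend, the equality $PSH(V,\theta)|_C=PSH(C,\theta|_C)$ is not contradicted, and "great freedom in choosing $\varphi$" is exactly what you do not have. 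For (ii), granting the Demailly--Paun null locus $Y$ with $\int_Y\alpha^{\dim Y}=0$, your claimed contradiction does not follow: $\int_Y\alpha^{\dim Y}$ is cohomological, and a $\theta|_Y$-psh function bounded below near one point $p$ is perfectly compatible with total Monge--Amp\`ere mass zero on $Y$; nothing forces $\int_Y(\theta|_Y+dd^c\varphi)^{\dim Y}>0$. (The paper also explicitly avoids relying on \cite{DP04}.)

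The paper's proof uses the extension property \emph{positively} rather than by contradiction. The key observation is that for any smooth complete intersection curve ${\mathcal C}=D_1\cap\cdots\cap D_{n-1}$ of ample divisors, $\int_{\mathcal C}\theta=\int_V(\theta+dd^c\varphi_0)\wedge\omega_1\wedge\cdots\wedge\omega_{n-1}>0$ because $\alpha$ is psef and nonzero, so $\alpha|_{\mathcal C}$ is automatically K\"ahler; a Bertini-type theorem puts such a curve through any prescribed point(s). For (i): extend a bounded $\theta|_{\mathcal C}$-psh function through each $x\in V$ to conclude that the envelope $\Psi_\theta$ with minimal singularities is finite everywhere, hence has zero Lelong numbers, hence $\alpha$ is nef by Demailly regularization. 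For (ii): for each $p$, use quasi-psh Green functions on curves through $p$ and points $q$, the bounded extension property, a finite max, and a gluing with $A(\|z\|^2-r^2)$ to produce a bounded $\theta$-psh $\psi_p$ with $\theta+dd^c\psi_p\geq\omega_V$ near $p$; averaging over a finite cover and regularizing gives a K\"ahler representative. If you want to salvage your approach, you would need to replace the curve/null-locus obstructions by this kind of direct construction, since the obstruction subvarieties you invoke either do not exist for transcendental classes or carry no quasi-psh functions to feed into the extension hypothesis.
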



This result is due to  Matsumura \cite[Theorem 1.2]{Mat13} when $\alpha=c_1(L)$ is the first Chern class of some holomorphic line bundle 
$L$ on $V$. His strategy of proof can be adapted replacing the algebraic Nakai-Moishezon criterion by the transcendental version due to Demailly-Paun \cite{DP04}. A similar observation has been made by Meng-Wang in \cite{MW21}, who moreover noticed that a stronger extension property is needed for $\alpha$ to be a K\"ahler class. We give here a direct proof of Proposition \ref{pro:matsumura}, without using the results of \cite{DP04}.

\begin{proof}[Proof of Proposition \ref{pro:matsumura}]
We fix a K\"ahler form $\omega_V$ on $V$. Let $\theta \in \alpha$ be a smooth closed $(1,1)$-form and let $\varphi_0 \in PSH(V,\theta)$.

We observe first that the restriction of $\alpha$ to any irreducible curve ${\mathcal C}\subset V$ is K\"ahler, provided that ${\mathcal C}=D_1 \cap \cdots \cap D_{n-1}$ is a complete intersection of ample divisors (this is in analogy to \cite[Lemma 2.1]{Mat13}). Indeed, let $h_j$ be a Hermitian metric on the line bundle $\cO_V(D_j)$ such that $\omega_j:=c_1(\cO_V(D_j),h_j)$ is a K\"ahler form. By the Lelong-Poincar\'e formula we have $[D_j]=\omega_j+dd^c\log|S_j|_{h_j}$, where $[D_j]$ is the current of integration on $D_j$ and $S_j$ is the canonical section of $\cO_V(D_j)$. Fix $\varepsilon>0$ such that $\omega_j\geq\varepsilon\omega_V$ for $1\leq j\leq n-1$. Since $\mathcal C$ is a complete intersection it follows from \cite{De93} that $[C]=[D_1]\wedge\ldots\wedge[D_{n-1}]$. Therefore
\[\int_{\mathcal C}\theta=\int_V\theta\wedge[\mathcal C]=\int_V(\theta+dd^c\varphi_0)\wedge\omega_1\wedge\ldots\wedge\omega_{n-1}\geq\varepsilon^{n-1}\int_V(\theta+dd^c\varphi_0)\wedge\omega_V^{n-1}>0,\]
as $\alpha\neq0$. Hence $\alpha|_{\mathcal C}$ is K\"ahler. 

Next, a Bertini type theorem \cite[Theorem 1.3]{Zha09} ensures that for any $p,q \in V$ there exists a smooth complete intersection curve ${\mathcal C}$ as above with $p,q \in {\mathcal C}$ (see \cite[Theorem 2.2]{Mat13}). 

\medskip

$(i)$ Following Demailly we consider
\[\Psi_\theta(x):=\left( \sup \{ \varphi(x):\, \varphi \in PSH(V,\theta) \text{ with } \varphi \leq 0 \} \right)^*.\]
This is a $\theta$-psh function with minimal singularities on $V$. Fix $x \in V$ and ${\mathcal C}$ a smooth curve as above with $x \in {\mathcal C}$. Since $\alpha|_{\mathcal C}$ is K\"ahler, one can find $\phi \in PSH({\mathcal C}, \theta|_{\mathcal C})$ that is bounded below on ${\mathcal C}$. It follows from the extension property that there exists $\varphi \in PSH(V,\theta)$ such that $\varphi|_{\mathcal C}=\phi$. Shifting $\varphi,\phi$ by a constant, we can assume that $\varphi \leq 0$, hence $\Psi_\theta(x) \geq \varphi(x)=\phi(x)>-\infty$. Thus $\Psi_\theta$ has zero Lelong number at each $x\in V$, so by Demailly's regularization theorem \cite{De92} one can find smooth functions  $v_{\varepsilon} \in PSH(V,\theta+\varepsilon \omega_V)$ decreasing to $\Psi_\theta$. Therefore $\alpha$ is a nef class, limit of the K\"ahler classes $\alpha+2\varepsilon \{ \omega_V \}$.

\medskip

$(ii)$ Using the bounded extension property, we start by constructing, for every point $p\in V$, a function $\psi_p\in PSH(V,\theta)\cap L^\infty(V)$ such that 
\[\theta+dd^c\psi_p\geq\omega_V \text{ in a neighborhood $W_p$ of $p$.}\]

Let $B(p,r)$ be a small coordinate ball centered at $p=0$. Let $A>0$ such that $Add^c\|z\|^2 +\theta \geq \omega_V$ in $B(p,r)$. For $q\in V\setminus B(p,r)$ we let ${\mathcal C}$ be a smooth curve as above passing through $p$ and $q$. As $\alpha|_{\mathcal C}$ is K\"ahler, we can find a $\theta$-psh function $u$ on ${\mathcal C}$ such that $u(p)=-\infty$ and $u$ is bounded below near $q$ (see e.g.\ \cite{CG09}). By the bounded extension property, $u$ is the restriction of a $\theta$-psh function $U_q$ on $V$ that is bounded below on a neighborhood $G_q$ of $q$. Since $V\setminus B(p,r)$ is compact, we have $V\setminus B(p,r)\subset \bigcup_{j=1}^N G_{q_j}$, for some $q_j\in V\setminus B(p,r)$, $1\leq j\leq N$. Let $\phi=\max_{1\leq j\leq N}U_{q_j}+C$. Then $\phi\in PSH(V,\theta)$, $\phi>0$ on $V\setminus B(p,r)$ if $C$ is sufficiently large, and $\phi(p)=-\infty$. We now use a gluing construction: the function
\[\psi_p=\begin{cases}\phi  & \text{ in } V \setminus B(p,r),\\\max\left\{A (\|z\|^2-r^2), \phi\right\} & \text{ in } B(p,r),\end{cases}\]
is bounded and $\theta$-psh on $V$, and it coincides with $A (\|z\|^2-r^2)$ in some ball $B(p,\delta)\subset B(p,r)$. Hence $\theta+dd^c\psi_p\geq\omega_V$ on $W_p=B(p,\delta)$.

We now pick a finite cover $V=\bigcup_{j=1}^\ell W_{p_j}$, and we set 
\[\psi=\frac{1}{\ell}\,\sum_{j=1}^\ell \psi_{p_j}.\]
Then $\psi\in PSH(V,\theta)\cap L^\infty(V)$ and $\theta+dd^c\psi_p\geq\frac{1}{\ell}\,\omega_V$ on $V$. By Demailly's regularization theorem \cite{De92} there exists a smooth $\theta$-psh function $\varphi\geq\psi$ such that  $\theta+dd^c\varphi\geq\frac{1}{2\ell}\,\omega_V$, hence $\alpha$ is a K\"ahler class.
\end{proof}

 \end{document}